\newtheorem{theorem}{Theorem}[section]
\theoremstyle{plain}
\newtheorem{lemma}{Lemma}[section]
\newtheorem{proposition}{Proposition}[section]
\numberwithin{equation}{section}
\begin{document}
\title[Hessian Estimates]{Hessian estimates for the sigma-2 equation in dimension three}
\author{Micah Warren}
\author{Yu YUAN}
\address{Department of Mathematics, Box 354350\\
University of Washington\\
Seattle, WA 98195}
\email{mwarren@math.washington.edu, yuan@math.washington.edu}
\thanks{Y.Y. is partially supported by an NSF grant.}
\date{\today}

\begin{abstract}
We derive a priori interior Hessian estimates for the special Lagrangian
equation $\sigma_{2}=1$ in dimension three.

\end{abstract}
\maketitle

\section{\bigskip Introduction}

In this article, we derive an \textit{interior a priori} Hessian estimate for
the $\sigma_{2}$ equation
\begin{equation}
\sigma_{2}\left(  D^{2}u\right)  =\lambda_{1}\lambda_{2}+\lambda_{2}%
\lambda_{3}+\lambda_{3}\lambda_{1}=1 \label{sigma2}%
\end{equation}
in dimension three, where $\lambda_{i}$ are the eigenvalues of the Hessian
$D^{2}u.$ We attack (\ref{sigma2}) via its special Lagrangian equation form%
\begin{equation}
\sum_{i=1}^{n}\arctan\lambda_{i}=\Theta\label{EsLag}%
\end{equation}
with $n=3$ and $\Theta=\pi/2.$ Equation (\ref{EsLag}) stems from the special
Lagrangian geometry [HL]. The Lagrangian graph $\left(  x,Du\left(  x\right)
\right)  \subset\mathbb{R}^{n}\times\mathbb{R}^{n}$ is called special when the
phase or the argument of the complex number $\left(  1+\sqrt{-1}\lambda
_{1}\right)  \cdots\left(  1+\sqrt{-1}\lambda_{n}\right)  $ is constant
$\Theta,$ and it is special if and only if $\left(  x,Du\left(  x\right)
\right)  $ is a (volume minimizing) minimal surface in $\mathbb{R}^{n}%
\times\mathbb{R}^{n}$ [HL, Theorem 2.3, Proposition 2.17].

We state our result in the following

\begin{theorem}
Let $u$ be a smooth solution to (\ref{sigma2}) on $B_{R}(0)\subset
\mathbb{R}^{3}.$ Then we have%
\[
|D^{2}u(0)|\leq C(3)\exp\left[  C(3)\max_{B_{R}(0)}|Du|^{3}/R^{3}\right]  .
\]

\end{theorem}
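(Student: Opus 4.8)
The plan is to move the whole problem onto the minimal Lagrangian graph $M=\{(x,Du(x)):x\in B_{R}\}\subset\mathbb{R}^{3}\times\mathbb{R}^{3}$ and to use potential theory there. First I would record the geometry: the induced metric is $g_{ij}=\delta_{ij}+\sum_{k}u_{ik}u_{kj}=(I+(D^{2}u)^{2})_{ij}$, and the special Lagrangian equation (\ref{EsLag}) with $\Theta=\pi/2$ says exactly that $M$ is minimal, equivalently that the ambient coordinate functions and the momentum functions $u_{x_{k}}$ are harmonic for the Laplace--Beltrami operator $\Delta_{g}=\frac{1}{\sqrt{\det g}}\partial_{i}(\sqrt{\det g}\,g^{ij}\partial_{j})$. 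I would also note the algebraic identity that at phase $\pi/2$ the volume density factorizes as $\sqrt{\det g}=\sqrt{\prod_{i}(1+\lambda_{i}^{2})}=\lambda_{1}+\lambda_{2}+\lambda_{3}-\lambda_{1}\lambda_{2}\lambda_{3}=\Delta u-\det D^{2}u$, since this makes the later volume bounds transparent.

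The analytic heart is a Jacobi-type inequality for the largest slope. Setting $b=\log\sqrt{1+\lambda_{\max}^{2}}$, where $\lambda_{\max}$ is the top eigenvalue of $D^{2}u$, a bound on $b(0)$ yields the desired estimate because it controls $\lambda_{\max}(0)$ (note $e^{2b}$ is the largest eigenvalue of $g$) and then $\sigma_{2}=1$ controls the remaining eigenvalues. I would show that on $M$
\[
\Delta_{g}b\geq c\,|\nabla_{g}b|_{g}^{2}\geq0
\]
for a dimensional constant $c>0$; in particular $b$ is subharmonic, and hence so is $v=e^{\alpha b}=(1+\lambda_{\max}^{2})^{\alpha/2}$ for any $\alpha>0$. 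This is the step that uses the critical phase $\Theta=\pi/2=(n-2)\pi/2$ and the dimension $n=3$ essentially: differentiating (\ref{EsLag}) twice along $M$ and commuting covariant derivatives produces third-derivative square terms together with error terms whose sign is controlled by the convexity of the level set $\{\sum\arctan\lambda_{i}\geq\pi/2\}$, and only at the (super)critical phase do these errors have the right sign to leave the quadratic gradient term intact. The technical nuisance that $\lambda_{\max}$ is merely Lipschitz where eigenvalues collide I would handle in the usual way, proving the inequality in the barrier sense or at a point where the top eigenvalue is simple after an orthogonal change of frame.

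With subharmonicity in hand I would pass to a pointwise bound through a local maximum principle on the minimal surface. The Michael--Simon Sobolev inequality on $M$, fed into a Moser iteration, yields a mean value inequality of the form
\[
v(0)\leq\frac{C}{\mathrm{Vol}(B_{r}\cap M)}\int_{B_{r}\cap M}v\,dv_{g},
\]
with balls taken in the ambient space. The remaining input is a volume bound: because $M$ is calibrated, hence area minimizing, a standard competitor argument controls the area of the graph over a ball, giving $\mathrm{Vol}(B_{r}\cap M)\leq C\,r^{2}(r+\max_{B_{R}}|Du|)$, and the identity $dv_{g}=(\Delta u-\det D^{2}u)\,dx$ lets me express and bound the weighted integrals in the same terms. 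Chasing the natural scaling $u(x)\mapsto u(Rx)/R^{2}$, which fixes $D^{2}u(0)$ and the equation while turning $\max_{B_{R}}|Du|$ into $\max_{B_{1}}|Du|/R$, produces the cubic exponent, and exponentiating the bound on $b(0)$ gives $|D^{2}u(0)|\leq C(3)\exp[C(3)\max_{B_{R}}|Du|^{3}/R^{3}]$.

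I expect the genuine obstacles to be two. The first is the Jacobi inequality: arranging the third-derivative error terms to cancel with the correct sign is delicate and is exactly where dimension three and the critical phase are indispensable, so the computation must be organized to expose the convexity of the phase constraint. The second is closing the integral estimate: the subharmonic weight $v$ grows together with $\lambda_{\max}$, so the mean value inequality cannot be applied naively, and the Moser iteration must be arranged so that the volume control coming from area minimization feeds the higher-integrability gain of the Michael--Simon inequality and self-improves to the stated pointwise exponential bound.
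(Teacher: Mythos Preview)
Your framework is right---the Jacobi inequality $\Delta_{g}b\geq c\left\vert\nabla_{g}b\right\vert^{2}$ for $b=\ln\sqrt{1+\lambda_{\max}^{2}}$, together with Michael--Simon on the minimal Lagrangian graph, is exactly the engine the paper uses---but the closing step you propose has a genuine gap. Passing to $v=e^{\alpha b}=(1+\lambda_{\max}^{2})^{\alpha/2}$ and running a Moser iteration requires a starting bound on some $\int v^{q}\,dv_{g}$, and no such bound is available: the integrand carries a positive power of $\lambda_{\max}$ against the volume form, which is precisely the quantity you are trying to estimate. You flag this obstacle yourself, but ``self-improvement'' via Michael--Simon plus area-minimizing volume control does not supply the missing initial integrability; the calibration bound only controls $\int 1\,dv_{g}$, not $\int v^{q}\,dv_{g}$.

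The paper closes the loop by \emph{not} exponentiating. It keeps $b$ logarithmic and exploits the full Jacobi inequality (not merely subharmonicity) to get a Caccioppoli estimate $\int\varphi^{2}\left\vert\nabla_{g}b\right\vert^{2}dv_{g}\leq C\int\left\vert\nabla_{g}\varphi\right\vert^{2}dv_{g}$; the right side is then controlled via the conformality identity $g^{ii}V=\sigma_{1}-\lambda_{i}$ (so $\left\vert\nabla_{g}\varphi\right\vert^{2}dv_{g}\leq C\Delta u\,dx$), which integrates to $\left\Vert Du\right\Vert_{L^{\infty}}$. The mean value inequality is applied to $b^{3}$ (subharmonic by Jacobi), and the Sobolev inequality to $\varphi b^{1/2}$, reducing everything to the Dirichlet energy of $b$ just bounded plus a term $\int b\left\vert\nabla_{g}\varphi\right\vert^{2}dv_{g}$. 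That last term is handled by the elementary pointwise bound $b\leq\lambda_{\max}+K\leq\Delta u+K$, which is where working with $\ln\lambda_{\max}$ rather than any power of $\lambda_{\max}$ pays off: the weight $b$ is itself integrable against $\Delta u\,dx$ in terms of $\left\Vert Du\right\Vert_{L^{\infty}}$. The cubic exponent comes from the Sobolev step $\int V\,dx\leq C\bigl(\int\left\vert\nabla_{g}\phi\right\vert^{2}dv_{g}\bigr)^{3}\leq C\left\Vert Du\right\Vert^{3}$ rather than from scaling alone. In short, the missing idea is to use the quadratic gradient term in the Jacobi inequality directly (as a reverse Poincar\'{e} for $b$) and to keep $b$ in logarithmic form so that it admits a pointwise linear bound by $\Delta u$.
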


By Trudinger's [T] gradient estimates for $\sigma_{k}$ equations, we can bound
$D^{2}u$ in terms of the solution $u$ in $B_{2R}\left(  0\right)  $ as%
\[
|D^{2}u(0)|\leq C(3)\exp\left[  C(3)\max_{B_{2R}(0)}|u|^{3}/R^{6}\right]  .
\]
One immediate consequence of the above estimates is a Liouville type result
for global solutions with quadratic growth to (\ref{sigma2}), namely any such
a solution must be quadratic (cf. [Y1], [Y2]). Another consequence is the
regularity (analyticity) of the $C^{0}$ viscosity solutions to (\ref{sigma2})
or (\ref{EsLag}) with $n=3$ and $\Theta=\pm\pi/2.$

In the 1950's, Heinz [H] derived a Hessian bound for the two dimensional
Monge-Amp\`{e}re equation, $\sigma_{2}(D^{2}u)=\lambda_{1}\lambda_{2}%
=\det(D^{2}u)=1,$ which is equivalent to (\ref{EsLag}) with $n=2\ $and
$\Theta=\pm\pi/2.$ In the 1970's Pogorelov [P] constructed his famous
counterexamples, namely irregular solutions to three dimensional
Monge-Amp\`{e}re equations $\sigma_{3}(D^{2}u)=\lambda_{1}\lambda_{2}%
\lambda_{3}=\det(D^{2}u)=1;$ see generalizations of the counterexamples for
$\sigma_{k}$ equations with $k\geq3$ in [U1]. Hessian estimates for solutions
with certain strict convexity constraints to Monge-Amp\`{e}re equations and
$\sigma_{k}$ equation ($k\geq2$) were derived by Pogorelov [P2] and Chou-Wang
[CW] respectively using the Pogorelov technique. Urbas [U2][U3], also Bao and
Chen obtained (pointwise) Hessian estimates in term of certain integrals of
the Hessian, for $\sigma_{k}$ equations and special Lagrangian equation (1.1)
with $n=3,\ \Theta=\pi$ respectively.

The heuristic idea of the proof of Theorem 1.1 is as follows. The function
$b=\ln\sqrt{1+\lambda_{\max}^{2}}$ is subharmonic so that $b$ at any point is
bounded by its integral over a ball around the point on the minimal surface by
Michael-Simon's mean value inequality [MS]. This special choice of $b$ is not
only subharmonic, but even stronger, satisfies a Jacobi inequality. This
Jacobi inequality leads to a bound on the integral of $b$ by the volume of the
ball on the minimal surface. Taking advantage of the divergence form of the
volume element of the minimal Lagrangian graph, we bound the volume in terms
of the height of the special Lagrangian graph, which is the gradient of the
solution to equation (\ref{EsLag}).

Now the challenging regularity problem for sigma-2 equations in dimension four
and higher still remains open to us.

\textbf{Notation. }$\partial_{i}=\frac{\partial}{\partial_{x_{i}}}%
,\ \partial_{ij}=\frac{\partial^{2}}{\partial x_{i}\partial x_{j}}%
,\ u_{i}=\partial_{i}u,\ u_{ji}=\partial_{ij}u$ etc., but $\lambda_{1}%
,\cdots,\lambda_{n}$ and $b_{1}=\ln\sqrt{1+\lambda_{1}^{2}}$, $b_{2}=\left(
\ln\sqrt{1+\lambda_{1}^{2}}+\ln\sqrt{1+\lambda_{2}^{2}}\right)  /2\ $do not
represent the partial derivatives. Further, $h_{ijk}$ will denote (the second
fundamental form)
\[
h_{ijk}=\frac{1}{\sqrt{1+\lambda_{i}^{2}}}\frac{1}{\sqrt{1+\lambda_{j}^{2}}%
}\frac{1}{\sqrt{1+\lambda_{k}^{2}}}u_{ijk}.
\]
when $D^{2}u$ is diagonalized. Finally $C\left(  n\right)  $ will denote
various constants depending only on dimension $n.$

\section{Preliminary inequalities}

Taking the gradient of both sides of the special Lagrangian equation
(\ref{EsLag}), we have
\begin{equation}
\sum_{i,j}^{n}g^{ij}\partial_{ij}\left(  x,Du\left(  x\right)  \right)
=0,\label{Emin}%
\end{equation}
where $\left(  g^{ij}\right)  $ is the inverse of the induced metric
$g=\left(  g_{ij}\right)  =I+D^{2}uD^{2}u$ on the surface $\left(  x,Du\left(
x\right)  \right)  \subset\mathbb{R}^{n}\times\mathbb{R}^{n}.$ Simple
geometric manipulation of (\ref{Emin}) yields the usual form of the minimal
surface equation
\[
\bigtriangleup_{g}\left(  x,Du\left(  x\right)  \right)  =0,
\]
where the Laplace-Beltrami operator of the metric $g$ is given by
\[
\bigtriangleup_{g}=\frac{1}{\sqrt{\det g}}\sum_{i,j}^{n}\partial_{i}\left(
\sqrt{\det g}g^{ij}\partial_{j}\right)  .
\]
Because we are using harmonic coordinates $\bigtriangleup_{g}x=0,$ we see that
$\bigtriangleup_{g}$ also equals the linearized operator of the special
Lagrangian equation (\ref{EsLag}) at $u,$%
\[
\bigtriangleup_{g}=\sum_{i,j}^{n}g^{ij}\partial_{ij}.
\]
The gradient and inner product with respect to the metric $g$ are
\begin{align*}
\nabla_{g}v &  =\left(  \sum_{k=1}^{n}g^{1k}v_{k},\cdots,\sum_{k=1}^{n}%
g^{nk}v_{k}\right)  ,\\
\left\langle \nabla_{g}v,\nabla_{g}w\right\rangle _{g} &  =\sum_{i,j=1}%
^{n}g^{ij}v_{i}w_{j},\ \ \text{in particular \ }\left\vert \nabla
_{g}v\right\vert ^{2}=\left\langle \nabla_{g}v,\nabla_{g}v\right\rangle _{g}.
\end{align*}
We begin with some geometric calculations.

\begin{lemma}
Let $u$ be a smooth solution to (\ref{EsLag}). Suppose that the Hessian
$D^{2}u$ is diagonalized and the eigenvalue $\lambda_{1}$ is distinct from all
other eigenvalues of $D^{2}u$ at point $p.$ Set $b_{1}=\ln\sqrt{1+\lambda
_{1}^{2}}$ near $p.$ \ Then we have at $p$%
\begin{equation}
\left\vert \nabla_{g}b_{1}\right\vert ^{2}=\sum_{k=1}^{n}\lambda_{1}%
^{2}h_{11k}^{2}\label{gradientb1}%
\end{equation}
and%
\begin{gather}
\bigtriangleup_{g}b_{1}=\nonumber\\
(1+\lambda_{1}^{2})h_{111}^{2}+\sum_{k>1}\left(  \frac{2\lambda_{1}}%
{\lambda_{1}-\lambda_{k}}+\frac{2\lambda_{1}^{2}\lambda_{k})}{\lambda
_{1}-\lambda_{k}}\right)  h_{kk1}^{2}\label{lapb1a}\\
+\sum_{k>1}\left[  1+\frac{2\lambda_{1}}{\lambda_{1}-\lambda_{k}}%
+\frac{\lambda_{1}^{2}\left(  \lambda_{1}+\lambda_{k}\right)  }{\lambda
_{1}-\lambda_{k}}\right]  h_{11k}^{2}\label{lapb1b}\\
+\sum_{k>j>1}2\lambda_{1}\left[  \frac{1+\lambda_{k}^{2}}{\lambda_{1}%
-\lambda_{k}}+\frac{1+\lambda_{j}^{2}}{\lambda_{1}-\lambda_{j}}+(\lambda
_{j}+\lambda_{k})\right]  h_{kj1}^{2}.\label{lapb1c}%
\end{gather}

\end{lemma}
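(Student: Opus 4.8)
The plan is to compute both identities at $p$ in the diagonalizing frame, where $g^{ij}=\delta_{ij}/(1+\lambda_{i}^{2})$, and to reduce everything to the chain rule applied to $b_{1}=\frac{1}{2}\ln(1+\lambda_{1}^{2})$ together with the first- and second-order perturbation formulas for the simple eigenvalue $\lambda_{1}$. Since $\lambda_{1}$ is distinct from the other eigenvalues near $p$, it is a smooth function of the entries of $D^{2}u$, and the first variation formula for a simple eigenvalue (with eigenvector $e_{1}$) gives $\partial_{k}\lambda_{1}=u_{11k}$ at $p$. Hence $\partial_{k}b_{1}=\lambda_{1}u_{11k}/(1+\lambda_{1}^{2})$, and substituting $u_{11k}=(1+\lambda_{1}^{2})\sqrt{1+\lambda_{k}^{2}}\,h_{11k}$ yields $\partial_{k}b_{1}=\lambda_{1}\sqrt{1+\lambda_{k}^{2}}\,h_{11k}$. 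Then $|\nabla_{g}b_{1}|^{2}=\sum_{k}g^{kk}(\partial_{k}b_{1})^{2}=\sum_{k}\lambda_{1}^{2}h_{11k}^{2}$, which is (\ref{gradientb1}).

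For the Laplacian, I would write $\bigtriangleup_{g}b_{1}=\sum_{i}g^{ii}\partial_{ii}b_{1}$ and differentiate the chain rule once more:
\[
\partial_{ii}b_{1}=\frac{(\partial_{i}\lambda_{1})^{2}+\lambda_{1}\,\partial_{ii}\lambda_{1}}{1+\lambda_{1}^{2}}-\frac{2\lambda_{1}^{2}(\partial_{i}\lambda_{1})^{2}}{(1+\lambda_{1}^{2})^{2}}.
\]
The delicate input is $\partial_{ii}\lambda_{1}$, for which the second variation formula for a simple eigenvalue supplies the correction from the other eigendirections,
\[
\partial_{ii}\lambda_{1}=u_{11ii}+2\sum_{m>1}\frac{u_{1mi}^{2}}{\lambda_{1}-\lambda_{m}},
\]
and it is exactly here that the hypothesis that $\lambda_{1}$ be simple enters: the denominators $\lambda_{1}-\lambda_{m}$ are precisely those appearing in (\ref{lapb1a})--(\ref{lapb1c}).

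Next I would eliminate the fourth-order derivatives $u_{11ii}$, since the target expression is quadratic in the third derivatives only. This I would do by differentiating the first-order consequence of the equation $\sum_{i,j}g^{ij}u_{ijk}=0$ (the second block of (\ref{Emin})) once more in $x_{1}$: using $\partial_{l}g^{ij}=-g^{ia}(\partial_{l}g_{ab})g^{bj}$ together with $\partial_{l}g_{ab}=(\lambda_{a}+\lambda_{b})u_{abl}$ at $p$ gives
\[
\sum_{i}g^{ii}u_{11ii}=\sum_{i,j}\frac{(\lambda_{i}+\lambda_{j})u_{ij1}^{2}}{(1+\lambda_{i}^{2})(1+\lambda_{j}^{2})}.
\]
Substituting this and the second variation formula into $\sum_{i}g^{ii}\partial_{ii}b_{1}$, and converting every $u_{ijk}$ to $h_{ijk}$ via $u_{ijk}=\sqrt{(1+\lambda_{i}^{2})(1+\lambda_{j}^{2})(1+\lambda_{k}^{2})}\,h_{ijk}$, turns $\bigtriangleup_{g}b_{1}$ into a pure quadratic form in the $h_{ijk}$.

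The remaining work is to sort the resulting terms by type — the diagonal $h_{111}^{2}$, the single-index terms $h_{kk1}^{2}$ and $h_{11k}^{2}$ with $k>1$, and the fully off-diagonal $h_{kj1}^{2}$ with $k>j>1$ — and to verify that the coefficients collapse to those in (\ref{lapb1a})--(\ref{lapb1c}). I expect this bookkeeping to be the main obstacle: it is routine but lengthy, and the cancellations must be tracked with care, for instance the $h_{111}^{2}$ coefficient reducing to $1+\lambda_{1}^{2}$ once the $\pm2\lambda_{1}^{2}$ contributions from the eliminated fourth-order term and the last term of $\partial_{ii}b_{1}$ cancel, and the combination of the second-variation sum with the off-diagonal part of the eliminated term must exploit the symmetry $u_{1mi}=u_{1im}$ to produce the mixed coefficients in (\ref{lapb1b}) and (\ref{lapb1c}).
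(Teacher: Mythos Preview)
Your proposal is correct and follows essentially the same route as the paper's own proof: the paper also obtains $\partial_{e}\lambda_{1}=u_{11e}$ and $\partial_{ee}\lambda_{1}=u_{11ee}+2\sum_{k>1}u_{1ke}^{2}/(\lambda_{1}-\lambda_{k})$ by perturbation of the simple eigenvalue, writes $\partial_{ee}b_{1}$ via the chain rule (in the equivalent form $\tfrac{\lambda_{1}}{1+\lambda_{1}^{2}}\partial_{ee}\lambda_{1}+\tfrac{1-\lambda_{1}^{2}}{(1+\lambda_{1}^{2})^{2}}(\partial_{e}\lambda_{1})^{2}$), eliminates $\sum_{\gamma}g^{\gamma\gamma}u_{11\gamma\gamma}$ by differentiating the minimal surface equation to obtain $\bigtriangleup_{g}u_{11}=\sum_{\alpha,\beta}g^{\alpha\alpha}g^{\beta\beta}(\lambda_{\alpha}+\lambda_{\beta})u_{\alpha\beta 1}^{2}$, and then regroups the resulting quadratic form in $h_{ijk}$ into the three displayed sums. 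Your outline matches this step for step, including the final bookkeeping you flag as the main labor.
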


\begin{proof}
[Proof]We first compute the derivatives of the smooth function $b_{1}$ near
$p.$ We may implicitly differentiate the characteristic equation
\[
\det(D^{2}u-\lambda_{1}I)=0
\]
near any point where $\lambda_{1}$ is distinct from the other eigenvalues.
Then we get at $p$
\begin{align*}
\partial_{e}\lambda_{1}  &  =\partial_{e}u_{11},\\
\partial_{ee}\lambda_{1}  &  =\partial_{ee}u_{11}+\sum_{k>1}2\frac{\left(
\partial_{e}u_{1k}\right)  ^{2}}{\lambda_{1}-\lambda_{k}},
\end{align*}
with arbitrary unit vector $e\in\mathbb{R}^{n}.$

Thus we have (\ref{gradientb1}) at $p$%
\[
|\nabla_{g}b_{1}|^{2}=\sum_{k=1}^{n}g^{kk}\left(  \frac{\lambda_{1}}%
{1+\lambda_{1}^{2}}\partial_{k}u_{11}\right)  ^{2}=\sum_{k=1}^{n}\lambda
_{1}^{2}h_{11k}^{2},
\]
where we used the notation $h_{ijk}=\sqrt{g^{ii}}\sqrt{g^{jj}}\sqrt{g^{kk}%
}u_{ijk}.$

From
\[
\partial_{ee}b_{1}=\partial_{ee}\ln\sqrt{1+\lambda_{1}^{2}}=\frac{\lambda_{1}%
}{1+\lambda_{1}^{2}}\partial_{ee}\lambda_{1}+\frac{1-\lambda_{1}^{2}}{\left(
1+\lambda_{1}^{2}\right)  ^{2}}\left(  \partial_{e}\lambda_{1}\right)  ^{2},
\]
we conclude that at $p$
\[
\partial_{ee}b_{1}=\frac{\lambda_{1}}{1+\lambda_{1}^{2}}\left[  \partial
_{ee}u_{11}+\sum_{k>1}2\frac{\left(  \partial_{e}u_{1k}\right)  ^{2}}%
{\lambda_{1}-\lambda_{k}}\right]  +\frac{1-\lambda_{1}^{2}}{\left(
1+\lambda_{1}^{2}\right)  ^{2}}\left(  \partial_{e}u_{11}\right)  ^{2},
\]
and
\begin{align}
\bigtriangleup_{g}b_{1}  &  =\sum_{\gamma=1}^{n}g^{\gamma\gamma}%
\partial_{\gamma\gamma}b_{1}\nonumber\\
&  =\sum_{\gamma=1}^{n}g^{\gamma\gamma}\frac{\lambda_{1}}{1+\lambda_{1}^{2}%
}\left(  \partial_{\gamma\gamma}u_{11}+\sum_{k>1}2\frac{\left(  u_{1k\gamma
}\right)  ^{2}}{\lambda_{1}-\lambda_{k}}\right)  +\sum_{\gamma=1}^{n}%
\frac{1-\lambda_{1}^{2}}{\left(  1+\lambda_{1}^{2}\right)  ^{2}}%
g^{\gamma\gamma}u_{11\gamma}^{2}. \label{E4thorder}%
\end{align}

Next we substitute the fourth order derivative terms $\partial_{\gamma\gamma
}u_{11}$ in the above by lower order derivative terms.\ Differentiating the
minimal surface equation (\ref{Emin}) $\sum_{\alpha,\beta=1}^{n}g^{\alpha
\beta}u_{j\alpha\beta}=0,$\ we obtain
\begin{align}
\bigtriangleup_{g}u_{ij}  &  =\sum_{\alpha,\beta=1}^{n}g^{\alpha\beta
}u_{ji\alpha\beta}=\sum_{\alpha,\beta=1}^{n}-\partial_{i}g^{_{_{\alpha\beta}}%
}u_{j\alpha\beta}=\sum_{\alpha,\beta,\gamma,\delta=1}^{n}g^{\alpha\gamma
}\partial_{i}g_{\gamma\delta}g^{\delta\beta}u_{j\alpha\beta}\nonumber\\
&  =\sum_{\alpha,\beta=1}^{n}g^{\alpha\alpha}g^{\beta\beta}(\lambda_{\alpha
}+\lambda_{\beta})u_{\alpha\beta i}u_{\alpha\beta j}, \label{lapujj}%
\end{align}
where we used
\[
\partial_{i}g_{_{\gamma\delta}}=\partial_{i}(\delta_{\gamma\delta}%
+\sum_{\varepsilon=1}^{n}u_{\gamma\varepsilon}u_{\varepsilon\delta}%
)=u_{\gamma\delta i}(\lambda_{\gamma}+\lambda_{\delta})
\]
with diagonalized $D^{2}u.$ Plugging (\ref{lapujj}) with $i=j=1$ in
(\ref{E4thorder}), we have at $p$%

\begin{align*}
\bigtriangleup_{g}b_{1}  &  =\frac{\lambda_{1}}{1+\lambda_{1}^{2}}\left[
\sum_{\alpha,\beta=1}^{n}g^{\alpha\alpha}g^{\beta\beta}(\lambda_{\alpha
}+\lambda_{\beta})u_{\alpha\beta1}^{2}+\sum_{\gamma=1}^{n}\sum_{k>1}%
2\frac{u_{1k\gamma}^{2}}{\lambda_{1}-\lambda_{k}}g^{\gamma\gamma}\right] \\
&  +\sum_{\gamma=1}^{n}\frac{1-\lambda_{1}^{2}}{\left(  1+\lambda_{1}%
^{2}\right)  ^{2}}g^{\gamma\gamma}u_{11\gamma}^{2}\\
&  =\lambda_{1}\sum_{\alpha,\beta=1}^{n}(\lambda_{\alpha}+\lambda_{\beta
})h_{\alpha\beta1}^{2}+\sum_{k>1}\sum_{\gamma=1}^{n}\frac{2\lambda_{1}\left(
1+\lambda_{k}^{2}\right)  }{\lambda_{1}-\lambda_{k}}h_{\gamma k1}^{2}%
+\sum_{\gamma=1}^{n}\left(  1-\lambda_{1}^{2}\right)  h_{11\gamma}^{2},
\end{align*}
where we used the notation $h_{ijk}=\sqrt{g^{ii}}\sqrt{g^{jj}}\sqrt{g^{kk}%
}u_{ijk}.$ Regrouping those terms $h_{\heartsuit\heartsuit1},\ h_{11\heartsuit
},$ and $h_{\heartsuit\clubsuit1}$ in the last expression, we have
\begin{gather*}
\bigtriangleup_{g}b_{1}=\left(  1-\lambda_{1}^{2}\right)  h_{111}^{2}%
+\sum_{\alpha=1}^{n}2\lambda_{1}\lambda_{\alpha}h_{\alpha\alpha1}^{2}%
+\sum_{k>1}\frac{2\lambda_{1}\left(  1+\lambda_{k}^{2}\right)  }{\lambda
_{1}-\lambda_{k}}h_{kk1}^{2}\\
+\sum_{k>1}2\lambda_{1}(\lambda_{k}+\lambda_{1})h_{k11}^{2}+\sum_{k>1}\left(
1-\lambda_{1}^{2}\right)  h_{11k}^{2}+\sum_{k>1}\frac{2\lambda_{1}\left(
1+\lambda_{k}^{2}\right)  }{\lambda_{1}-\lambda_{k}}h_{1k1}^{2}\\
+\sum_{k>j>1}2\lambda_{1}(\lambda_{j}+\lambda_{k})h_{jk1}^{2}+\sum
_{\substack{j,k>1,\\j\neq k}}\frac{2\lambda_{1}\left(  1+\lambda_{k}%
^{2}\right)  }{\lambda_{1}-\lambda_{k}}h_{jk1}^{2}.
\end{gather*}
After simplifying the above expression, we have the second formula in Lemma 2.1.
\end{proof}

\begin{lemma}
Let $u$ be a smooth solution to (\ref{EsLag}) with $n=3$ and $\Theta\geq
\pi/2.$ Suppose that the ordered eigenvalues $\lambda_{1}\geq\lambda_{2}%
\geq\lambda_{3}$ of the Hessian $D^{2}u$ satisfy $\lambda_{1}>\lambda_{2}$ at
point $p.$ Set%
\[
b_{1}=\ln\sqrt{1+\lambda_{\max}^{2}}=\ln\sqrt{1+\lambda_{1}^{2}}.
\]
Then we have at $p$%
\begin{equation}
\bigtriangleup_{g}b_{1}\geq\frac{1}{3}|\nabla_{g}b_{1}|^{2}.\label{Jacobi-3d}%
\end{equation}

\end{lemma}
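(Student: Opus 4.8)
The plan is to take the exact expression for $\bigtriangleup_g b_1$ from Lemma 2.1, specialize to $n=3$, and compare it term-by-term against $\tfrac{1}{3}|\nabla_g b_1|^2 = \tfrac{1}{3}\sum_k \lambda_1^2 h_{11k}^2$ from (\ref{gradientb1}). So I need to show that the difference $\bigtriangleup_g b_1 - \tfrac{1}{3}|\nabla_g b_1|^2$ is nonnegative. Let me think about what structure I have to exploit.

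Let me recall the ingredients. With $n=3$, writing out (\ref{lapb1a})–(\ref{lapb1c}), the Laplacian is a quadratic form in the third-derivative quantities $h_{111}, h_{221}, h_{331}, h_{112}, h_{113}, h_{231}$. The gradient-squared term only involves $h_{111}^2, h_{112}^2, h_{113}^2$ (these are the $h_{11k}$). So the comparison splits naturally: the terms $h_{221}^2, h_{331}^2, h_{231}^2$ appear only in the Laplacian and I must show their coefficients are nonnegative (or can be absorbed), while the terms $h_{111}^2, h_{112}^2, h_{113}^2$ appear in both and I must show the Laplacian coefficient dominates $\tfrac{1}{3}\lambda_1^2$.

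The hard part, I expect, will be the terms involving $h_{112}^2$ and $h_{113}^2$ (the (\ref{lapb1b}) group), because the coefficient $1 + \tfrac{2\lambda_1}{\lambda_1-\lambda_k} + \tfrac{\lambda_1^2(\lambda_1+\lambda_k)}{\lambda_1-\lambda_k}$ is not obviously bounded below by $\tfrac{1}{3}\lambda_1^2$, and in fact the sign of $\lambda_1 - \lambda_k$ and of $\lambda_k$ can vary. This is where the hypotheses $\Theta \ge \pi/2$ and $n=3$ must do real work: the constraint $\arctan\lambda_1 + \arctan\lambda_2 + \arctan\lambda_3 = \Theta \ge \pi/2$, together with the ordering $\lambda_1 \ge \lambda_2 \ge \lambda_3$, restricts the eigenvalues — in particular it forces $\lambda_1 > 0$ and controls the ratios $\lambda_k/\lambda_1$ and the products $\lambda_1\lambda_k$ from below. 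I would first derive, from this phase condition, the algebraic facts I need (e.g. $\lambda_1\lambda_2 \ge $ some bound, or that $\lambda_1 - \lambda_k$ is controlled relative to $1+\lambda_k^2$), reducing everything to a finite list of scalar inequalities in the three eigenvalues.

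Concretely, I would proceed as follows. First, fix that $\lambda_1$ is the largest eigenvalue and note $\lambda_1>0$ follows from $\Theta\ge\pi/2$. Then I would handle the "off-diagonal" terms $h_{221}^2, h_{331}^2, h_{231}^2$ whose coefficients $\tfrac{2\lambda_1}{\lambda_1-\lambda_k}+\tfrac{2\lambda_1^2\lambda_k}{\lambda_1-\lambda_k}$ and the (\ref{lapb1c}) bracket I would show are nonnegative using $\lambda_1-\lambda_k>0$ and the sign/magnitude information on $\lambda_k$ coming from the phase; some of these may require using the equation $\lambda_1\lambda_2+\lambda_2\lambda_3+\lambda_3\lambda_1 = $ const-sign rather than just the ordering. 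Next, for the diagonal-type terms $h_{111}^2, h_{11k}^2$ that also appear in the gradient, I would verify the pointwise inequalities $(1+\lambda_1^2) \ge \tfrac{1}{3}\lambda_1^2$ (trivial) and that the bracket in (\ref{lapb1b}) exceeds $\tfrac{1}{3}\lambda_1^2$ after invoking the eigenvalue constraints. The main obstacle is precisely this last family, and the key realization I would test is that the factor $\tfrac{1}{3}$ (rather than something larger) is exactly what the $\Theta\ge\pi/2$, $n=3$ geometry permits — so the proof likely reduces to a clean case analysis on the signs of $\lambda_2,\lambda_3$ and a couple of elementary but non-obvious algebraic inequalities among $\lambda_1,\lambda_2,\lambda_3$ subject to the phase constraint.
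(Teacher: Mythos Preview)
Your plan to reduce the inequality to a finite list of scalar inequalities in $\lambda_1,\lambda_2,\lambda_3$ alone, treating the $h_{ijk}$ as free parameters and checking that each coefficient is nonnegative (or dominates $\tfrac{1}{3}\lambda_1^2$), cannot succeed. Concretely, after subtracting $\tfrac{1}{3}|\nabla_g b_1|^2$ the coefficient of $h_{331}^2$ is
\[
\frac{2\lambda_1}{\lambda_1-\lambda_3}+\frac{2\lambda_1^2\lambda_3}{\lambda_1-\lambda_3}
=\frac{2\lambda_1(1+\lambda_1\lambda_3)}{\lambda_1-\lambda_3},
\]
and when $\lambda_3<0$ (which the phase hypothesis does \emph{not} exclude) this is strictly negative once $\lambda_1$ is large. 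So if $h_{111},h_{221},h_{331}$ were independent, the quadratic form would be indefinite and your coefficient-by-coefficient scheme breaks down exactly at the term you flagged as ``off-diagonal.''

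The missing idea is that the $h_{\ast\ast 1}$ are \emph{not} independent: differentiating the equation once gives the linear constraint $h_{111}+h_{221}+h_{331}=0$ at $p$. The paper uses this to eliminate $h_{331}$ via Cauchy--Schwarz,
\[
h_{331}^2=(h_{111}+h_{221})^2\le\Bigl(\tfrac{2}{3}h_{111}^2+\tfrac{2\lambda_2}{\lambda_1-\lambda_2}h_{221}^2\Bigr)\Bigl(\tfrac{3}{2}+\tfrac{\lambda_1-\lambda_2}{2\lambda_2}\Bigr),
\]
and then the residual eigenvalue factor simplifies to $\sigma_2/[(\lambda_1-\lambda_3)\lambda_2]>0$, using that $\Theta\ge\pi/2$ forces $\sigma_2\ge 1$. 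The remaining eigenvalue facts you anticipated ($\lambda_1\ge\lambda_2>0$, $\lambda_i+\lambda_j\ge 0$, and the sharper $\lambda_1+2\lambda_3>0$) handle the $h_{231}^2$ and $h_{11k}^2$ groups as you guessed, but the $h_{331}^2$ term genuinely requires the PDE constraint among the $h$'s, not just inequalities among the $\lambda$'s.
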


\begin{proof}
[Proof ]We assume that the Hessian $D^{2}u$ is diagonalized at point $p.$

Step 1. Recall $\theta_{i}=\arctan\lambda_{i}\in(-\pi/2,\pi/2)$ and
\ $\theta_{1}+\theta_{2}+\theta_{3}=\Theta\geq\pi/2.$ It is easy to see that
$\theta_{1}\geq\theta_{2}>0$ and $\theta_{i}+\theta_{j}\geq0$ for any pair.
Consequently $\lambda_{1}\geq\lambda_{2}>0\ $and$\ \lambda_{i}+\lambda_{j}%
\geq0$ for any pair of distinct eigenvalues. It follows that (\ref{lapb1c}) in
the formula for $\bigtriangleup_{g}b_{1}$ is positive, then from
(\ref{lapb1a}) and (\ref{lapb1b}) we have the inequality\
\begin{equation}
\bigtriangleup_{g}b_{1}\geq\lambda_{1}^{2}\left(  h_{111}^{2}+\sum_{k>1}%
\frac{2\lambda_{k}}{\lambda_{1}-\lambda_{k}}h_{kk1}^{2}\right)  +\lambda
_{1}^{2}\sum_{k>1}\left(  1+\frac{2\lambda_{k}}{\lambda_{1}-\lambda_{k}%
}\right)  h_{11k}^{2}. \label{cor21z}%
\end{equation}

Combining (\ref{cor21z}) and (\ref{gradientb1}) gives \
\begin{gather}
\bigtriangleup_{g}b_{1}-\frac{1}{3}|\nabla_{g}b_{1}|^{2}\geq\nonumber\\
\lambda_{1}^{2}\left(  \frac{2}{3}h_{111}^{2}+\sum_{k>1}\frac{2\lambda_{k}%
}{\lambda_{1}-\lambda_{k}}h_{kk1}^{2}\right)  +\lambda_{1}^{2}\sum_{k>1}%
\frac{2\left(  \lambda_{1}+2\lambda_{k}\right)  }{3\left(  \lambda_{1}%
-\lambda_{k}\right)  }h_{11k}^{2}. \label{cor21a}%
\end{gather}

Step 2. \ We show that the last term in (\ref{cor21a}) is nonnegative. Note
that $\lambda_{1}+2\lambda_{k}\geq\lambda_{1}+2\lambda_{3}.$ We only need to
show that $\lambda_{1}+2\lambda_{3}\geq0$ in the case that $\lambda_{3}<0$ or
equivalently $\theta_{3}<0.$ From $\theta_{1}+\theta_{2}+\theta_{3}=\Theta
\geq$ $\pi/2,$ we have
\[
\frac{\pi}{2}>\theta_{3}+\frac{\pi}{2}=\left(  \frac{\pi}{2}-\theta
_{1}\right)  +\left(  \frac{\pi}{2}-\theta_{2}\right)  +\Theta-\frac{\pi}%
{2}\geq2\left(  \frac{\pi}{2}-\theta_{1}\right)  .
\]
It follows that%
\[
-\frac{1}{\lambda_{3}}=\tan\left(  \theta_{3}+\frac{\pi}{2}\right)
>2\tan\left(  \frac{\pi}{2}-\theta_{1}\right)  =\frac{2}{\lambda_{1}},
\]
then%
\begin{equation}
\lambda_{1}+2\lambda_{3}>0. \label{l_1+2l_3>0}%
\end{equation}
\ 

Step 3. \ We show that the first term in (\ref{cor21a}) is nonnegative by
proving
\begin{equation}
\frac{2}{3}h_{111}^{2}+\frac{2\lambda_{2}}{\lambda_{1}-\lambda_{2}}h_{221}%
^{2}+\frac{2\lambda_{3}}{\lambda_{1}-\lambda_{3}}h_{331}^{2}\geq
0.\label{cor21c}%
\end{equation}
We only need to show it for $\lambda_{3}<0.$ Directly from the minimal surface
equation (\ref{Emin})%
\[
h_{111}+h_{221}+h_{331}=0,
\]
we bound
\[
h_{331}^{2}=\left(  h_{111}+h_{221}\right)  ^{2}\leq\left(  \frac{2}{3}%
h_{111}^{2}+\frac{2\lambda_{2}}{\lambda_{1}-\lambda_{2}}h_{221}^{2}\right)
\left(  \frac{3}{2}+\frac{\lambda_{1}-\lambda_{2}}{2\lambda_{2}}\right)  .
\]
It follows that
\begin{gather*}
\frac{2}{3}h_{111}^{2}+\frac{2\lambda_{2}}{\lambda_{1}-\lambda_{2}}h_{221}%
^{2}+\frac{2\lambda_{3}}{\lambda_{1}-\lambda_{3}}h_{331}^{2}\geq\\
\left(  \frac{2}{3}h_{111}^{2}+\frac{2\lambda_{2}}{\lambda_{1}-\lambda_{2}%
}h_{221}^{2}\right)  \left[  1+\frac{2\lambda_{3}}{\lambda_{1}-\lambda_{3}%
}\left(  \frac{3}{2}+\frac{\lambda_{1}-\lambda_{2}}{2\lambda_{2}}\right)
\right]  .
\end{gather*}
The last term becomes%
\[
1+\frac{2\lambda_{3}}{\lambda_{1}-\lambda_{3}}\left(  \frac{3}{2}%
+\frac{\lambda_{1}-\lambda_{2}}{2\lambda_{2}}\right)  =\frac{\sigma_{2}%
}{\left(  \lambda_{1}-\lambda_{3}\right)  \lambda_{2}}>0.
\]
The above inequality is from the observation
\[
\operatorname{Re}%
{\displaystyle\prod\limits_{i=1}^{3}}
\left(  1+\sqrt{-1}\lambda_{i}\right)  =1-\sigma_{2}\leq0
\]
for $3\pi/2>\theta_{1}+\theta_{2}+\theta_{3}=\Theta\geq$ $\pi/2.$ Therefore
(\ref{cor21c}) holds.

We have proved the pointwise Jacobi inequality (\ref{Jacobi-3d}) in Lemma 2.2.
\end{proof}

\begin{lemma}
Let $u$ be a smooth solution to (\ref{EsLag}) with $n=3$ and $\Theta\geq
\pi/2.$ Suppose that the ordered eigenvalues $\lambda_{1}\geq\lambda_{2}%
\geq\lambda_{3}$ of the Hessian $D^{2}u$ satisfy $\lambda_{2}>\lambda_{3}$ at
point $p.$ Set%
\[
b_{2}=\frac{1}{2}\left(  \ln\sqrt{1+\lambda_{1}^{2}}+\ln\sqrt{1+\lambda
_{2}^{2}}\right)  .
\]
\ Then $b_{2}$ satisfies at $p$
\begin{equation}
\bigtriangleup_{g}b_{2}\geq0.\label{subharmonicb2}%
\end{equation}
Further, suppose that $\lambda_{1}\equiv$ $\lambda_{2}$ in a neighborhood of
$p.$\ Then $b_{2}$ satisfies at $p$
\begin{equation}
\bigtriangleup_{g}b_{2}\geq\frac{1}{3}\left\vert \nabla_{g}b_{2}\right\vert
^{2}.\label{jacobi-b2}%
\end{equation}

\end{lemma}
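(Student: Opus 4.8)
The plan is to reduce both inequalities to the single-eigenvalue computation of Lemma 2.1, applied now to the two largest eigenvalues, and to treat the locus $\{\lambda_1=\lambda_2\}$ with care. The essential preliminary observation is that, because $\lambda_2>\lambda_3$ at $p$, the pair $\{\lambda_1,\lambda_2\}$ stays separated from $\lambda_3$ throughout a neighborhood of $p$, so the symmetric quantity $(1+\lambda_1^2)(1+\lambda_2^2)$, and hence $b_2$, is smooth near $p$ even across points where $\lambda_1=\lambda_2$. A clean way to record this is the decomposition $b_2=\tfrac12\ln\det g-\ln\sqrt{1+\lambda_3^2}$, valid since $\det g=\prod_i(1+\lambda_i^2)$: here $\tfrac12\ln\det g$ is smooth everywhere (a logarithm of a positive polynomial in $D^2u$) and $\ln\sqrt{1+\lambda_3^2}$ is smooth because $\lambda_3$ is the simple smallest eigenvalue. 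This smoothness is what licenses the continuity argument below.

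For the subharmonicity \eqref{subharmonicb2} I would first work at points where in addition $\lambda_1>\lambda_2$, so that all three eigenvalues are simple. There I apply Lemma 2.1 to $b_1=\ln\sqrt{1+\lambda_1^2}$ and, after interchanging the indices $1$ and $2$, to $\tilde b=\ln\sqrt{1+\lambda_2^2}$, and add to get $\bigtriangleup_g b_2=\tfrac12(\bigtriangleup_g b_1+\bigtriangleup_g\tilde b)$ together with $|\nabla_g b_2|^2=\tfrac14\sum_k(\lambda_1 h_{11k}+\lambda_2 h_{22k})^2$ from \eqref{gradientb1}. The first real step is to verify that the singular factors $1/(\lambda_1-\lambda_2)$ produced by the two copies of \eqref{lapb1a}--\eqref{lapb1c} cancel in the sum, leaving an expression that is polynomial in the $\lambda_i$ and the $h_{ijk}$. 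I would then run a sign analysis parallel to Lemma 2.2: using $\lambda_1\geq\lambda_2>0$, the pairwise bound $\lambda_i+\lambda_j\geq0$, the estimate \eqref{l_1+2l_3>0}, and above all the inequality $\sigma_2\geq1$, equivalent to $\operatorname{Re}\prod_i(1+\sqrt{-1}\lambda_i)\leq0$ and forced by $\Theta\geq\pi/2$, to show that each grouped term is nonnegative, giving $\bigtriangleup_g b_2\geq0$ on $\{\lambda_1>\lambda_2\}$. Since $\bigtriangleup_g b_2$ is continuous, the inequality then passes to every point of $\{\lambda_1=\lambda_2\}$ that is a limit of points with $\lambda_1>\lambda_2$.

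The one configuration this density argument misses is when $\lambda_1\equiv\lambda_2$ on an entire neighborhood of $p$, which is exactly the hypothesis of \eqref{jacobi-b2}; there $b_2=\ln\sqrt{1+\lambda_1^2}$ plays the role of the maximal-eigenvalue function of Lemma 2.2, but is now smooth because $\lambda_1=\lambda_2$ is a double root separated from $\lambda_3$. The decomposition $b_2=\tfrac12\ln\det g-\ln\sqrt{1+\lambda_3^2}$ again makes $b_2$, $\nabla_g b_2$, and $\bigtriangleup_g b_2$ manifestly well-defined (the troublesome eigenvalue is the simple $\lambda_3$, to which Lemma 2.1 applies, while $\ln\det g$ is unconditionally smooth). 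I would then prove \eqref{jacobi-b2} by mirroring the proof of Lemma 2.2 in the coincident case: specialize the desingularized formula to $\lambda_1=\lambda_2$, and repeat Steps 2 and 3, disposing of the $h_{11k}^2$ terms via the analogue of \eqref{l_1+2l_3>0} and controlling the remaining term with the constraint $h_{111}+h_{221}+h_{331}=0$ from \eqref{Emin} together with the algebraic identity reducing the critical bracket to $\sigma_2/[(\lambda_1-\lambda_3)\lambda_2]>0$ exactly as in \eqref{cor21c}.

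The main obstacle is the bookkeeping in the desingularization step: one must check that every $1/(\lambda_1-\lambda_2)$ contribution cancels when the two copies of Lemma 2.1 are summed, and that the surviving polynomial---after subtracting $\tfrac13|\nabla_g b_2|^2$ in the coincident case---is nonnegative under $\Theta\geq\pi/2$. This positivity is delicate because, unlike $\lambda_1$, the second eigenvalue need not satisfy $\lambda_2+2\lambda_3\geq0$, so the estimate cannot be made termwise and must instead exploit the full strength of $\sigma_2\geq1$ through the identity of \eqref{cor21c}, precisely as in the proof of Lemma 2.2.
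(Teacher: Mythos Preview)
Your plan for the subharmonicity \eqref{subharmonicb2} matches the paper's: sum the two instances of Lemma~2.1, verify the $1/(\lambda_1-\lambda_2)$ terms cancel so that the resulting formula extends continuously to $\lambda_1=\lambda_2>\lambda_3$, and then check the sign of each grouped term. One small correction: the algebraic identity that closes the argument for the lines containing $h_{331}^2$ and $h_{332}^2$ is not literally the bracket $\sigma_2/[(\lambda_1-\lambda_3)\lambda_2]$ from \eqref{cor21c}; in the paper it comes out as $\dfrac{\sigma_2}{\lambda_1\lambda_2}-\dfrac{\lambda_3}{\lambda_1+2\lambda_2}\geq 0$ (and its $1\leftrightarrow 2$ twin), so you should expect a slightly different simplification than in Lemma~2.2.

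For the Jacobi inequality \eqref{jacobi-b2} there is a genuine gap. You propose to specialize the desingularized formula to $\lambda_1=\lambda_2$ and then ``repeat Steps~2 and~3'' of Lemma~2.2, using only the minimal-surface relation $h_{11k}+h_{22k}+h_{33k}=0$. That is not enough: with $\lambda_1=\lambda_2=\lambda$ and $\lambda_3$ near $-\lambda/2$ (so $\sigma_2=1$), the quadratic form coming from (the analogue of) line \eqref{lapb2a} minus $\tfrac{\lambda^2}{6}(h_{111}+h_{221})^2$ is \emph{indefinite} in $(h_{111},h_{221})$ for large $\lambda$; a direct check at $h_{111}=1$, $h_{221}=5/13$ gives a negative value. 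What rescues the inequality is the additional constraint $h_{11k}=h_{22k}$ (equivalently $u_{11k}=u_{22k}$ at $p$), which you have not used. This is precisely where the \emph{neighborhood} hypothesis $\lambda_1\equiv\lambda_2$ enters: differentiating the eigenvector equations $(D^2u)U=\tfrac{\lambda_1+\lambda_2}{2}U$, $(D^2u)V=\tfrac{\lambda_1+\lambda_2}{2}V$ in that neighborhood forces $u_{11e}=u_{22e}$ for every direction $e$, hence $h_{11k}=h_{22k}=-\tfrac12 h_{33k}$. With this extra relation, both $|\nabla_g b_2|^2$ and the lower bound for $\bigtriangleup_g b_2$ collapse to expressions in $h_{11k}^2$ alone, and the final positivity reduces cleanly to $\lambda_1+2\lambda_3>0$. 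So your outline is on the right track, but you must insert this differentiation step; the identity of \eqref{cor21c} plays no role in this part.
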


\begin{proof}
\ We assume that Hessian $D^{2}u$ is diagonalized at point $p.$ We may use
Lemma 2.1 to obtain expressions for both \ $\bigtriangleup_{g}\ln
\sqrt{1+\lambda_{1}^{2}}$ and $\bigtriangleup_{g}\ln\sqrt{1+\lambda_{2}^{2}},$
whenever the eigenvalues of $D^{2}u$ are distinct. From (\ref{lapb1a}),
(\ref{lapb1b}), and (\ref{lapb1c}),\ we have
\begin{gather}
\bigtriangleup_{g}\ln\sqrt{1+\lambda_{1}^{2}}+\bigtriangleup_{g}\ln
\sqrt{1+\lambda_{2}^{2}}=\label{lapb2}\\
(1+\lambda_{1}^{2})h_{111}^{2}+\sum_{k>1}\frac{2\lambda_{1}(1+\lambda
_{1}\lambda_{k})}{\lambda_{1}-\lambda_{k}}h_{kk1}^{2}+\sum_{k>1}\left[
1+\lambda_{1}^{2}+2\lambda_{1}\left(  \frac{1+\lambda_{1}\lambda_{k}}%
{\lambda_{1}-\lambda_{k}}\right)  \right]  h_{11k}^{2}\nonumber\\
+2\lambda_{1}\left[  \frac{1+\lambda_{3}^{2}}{\lambda_{1}-\lambda_{3}}%
+\frac{1+\lambda_{2}^{2}}{\lambda_{1}-\lambda_{2}}+(\lambda_{3}+\lambda
_{2})\right]  h_{321}^{2}\nonumber\\
+(1+\lambda_{2}^{2})h_{222}^{2}+\sum_{k\neq2}\frac{2\lambda_{2}(1+\lambda
_{2}\lambda_{k})}{\lambda_{2}-\lambda_{k}}h_{kk2}^{2}+\sum_{k\neq2}\left[
1+\lambda_{2}^{2}+2\lambda_{2}\left(  \frac{1+\lambda_{2}\lambda_{k}}%
{\lambda_{2}-\lambda_{k}}\right)  \right]  h_{22k}^{2}\nonumber\\
+2\lambda_{2}\left[  \frac{1+\lambda_{3}^{2}}{\lambda_{2}-\lambda_{3}}%
+\frac{1+\lambda_{1}^{2}}{\lambda_{2}-\lambda_{1}}+(\lambda_{3}+\lambda
_{1})\right]  h_{321}^{2}.\text{ \ }\nonumber
\end{gather}
The function $b_{2}$ is symmetric in $\lambda_{1}$ and $\lambda_{2},$ thus
$b_{2}$ is smooth even when $\lambda_{1}=\lambda_{2},$ provided that
$\lambda_{2}>\lambda_{3}.$ \ We simplify (\ref{lapb2}) to the following, which
holds by continuity wherever $\lambda_{1}\geq\lambda_{2}>\lambda_{3}.$%
\[
2\bigtriangleup_{g}b_{2}=
\]%
\begin{align}
&  (1+\lambda_{1}^{2})h_{111}^{2}+(3+\lambda_{2}^{2}+2\lambda_{1}\lambda
_{2})h_{221}^{2}+\left(  \frac{2\lambda_{1}}{\lambda_{1}-\lambda_{3}}%
+\frac{2\lambda_{1}^{2}\lambda_{3}}{\lambda_{1}-\lambda_{3}}\right)
h_{331}^{2}\label{lapb2a}\\
&  +(3+\lambda_{1}^{2}+2\lambda_{1}\lambda_{2})h_{112}^{2}+(1+\lambda_{2}%
^{2})h_{222}^{2}+\left(  \frac{2\lambda_{2}}{\lambda_{2}-\lambda_{3}}%
+\frac{2\lambda_{2}^{2}\lambda_{3}}{\lambda_{2}-\lambda_{3}}\right)
h_{332}^{2}\label{lapb2b}\\
&  +\left[  \frac{3\lambda_{1}-\lambda_{3}+\lambda_{1}^{2}(\lambda_{1}%
+\lambda_{3})}{\lambda_{1}-\lambda_{3}}\right]  h_{113}^{2}+\left[
\frac{3\lambda_{2}-\lambda_{3}+\lambda_{2}^{2}(\lambda_{2}+\lambda_{3}%
)}{\lambda_{2}-\lambda_{3}}\right]  h_{223}^{2}\label{lapb2c}\\
&  +2\left[  1+\lambda_{1}\lambda_{2}+\lambda_{2}\lambda_{3}+\lambda
_{3}\lambda_{1}+\frac{\lambda_{1}\left(  1+\lambda_{3}^{2}\right)  }%
{\lambda_{1}-\lambda_{3}}+\frac{\lambda_{2}\left(  1+\lambda_{3}^{2}\right)
}{\lambda_{2}-\lambda_{3}}\right]  h_{123}^{2}. \label{lapb2d}%
\end{align}
Using the relations $\lambda_{1}\geq\lambda_{2}>0,\ \lambda_{i}+\lambda
_{j}>0,$ and $\sigma_{2}\geq1$ derived in the proof of Lemma 2.2, we see that
(\ref{lapb2d}) and (\ref{lapb2c}) are nonnegative. We only need to justify the
nonnegativity of (\ref{lapb2a}) and (\ref{lapb2b}) for $\lambda_{3}<0.$ From
the minimal surface equation (\ref{Emin}), we know%
\[
h_{332}^{2}=\left(  h_{112}+h_{222}\right)  ^{2}\leq\left[  (\lambda_{1}%
^{2}+2\lambda_{1}\lambda_{2})h_{112}^{2}+\lambda_{2}^{2}h_{222}^{2}\right]
\left(  \frac{1}{\lambda_{1}^{2}+2\lambda_{1}\lambda_{2}}+\frac{1}{\lambda
_{2}^{2}}\right)  .
\]
It follows that%
\begin{align*}
(\ref{lapb2b})  &  \geq(\lambda_{1}^{2}+2\lambda_{1}\lambda_{2})h_{112}%
^{2}+\lambda_{2}^{2}h_{222}^{2}+\frac{2\lambda_{2}^{2}\lambda_{3}}{\lambda
_{2}-\lambda_{3}}h_{332}^{2}\\
&  \geq\left[  (\lambda_{1}^{2}+2\lambda_{1}\lambda_{2})h_{112}^{2}%
+\lambda_{2}^{2}h_{222}^{2}\right]  \left[  1+\frac{2\lambda_{2}^{2}%
\lambda_{3}}{\lambda_{2}-\lambda_{3}}\left(  \frac{1}{\lambda_{1}^{2}%
+2\lambda_{1}\lambda_{2}}+\frac{1}{\lambda_{2}^{2}}\right)  \right]  .
\end{align*}
The last term becomes%
\begin{align*}
&  \frac{2\lambda_{2}^{2}\lambda_{3}}{\lambda_{2}-\lambda_{3}}\left(
\frac{\lambda_{2}-\lambda_{3}}{2\lambda_{2}^{2}\lambda_{3}}+\frac{1}%
{\lambda_{1}^{2}+2\lambda_{1}\lambda_{2}}+\frac{1}{\lambda_{2}^{2}}\right) \\
&  =\frac{\lambda_{2}}{\lambda_{2}-\lambda_{3}}\left[  \frac{\sigma_{2}%
}{\lambda_{1}\lambda_{2}}-\frac{\lambda_{3}}{\left(  \lambda_{1}+2\lambda
_{2}\right)  }\right]  \geq0.
\end{align*}
Thus (\ref{lapb2b}) is nonnegative. Similarly (\ref{lapb2a}) is nonnegative.
We have proved (\ref{subharmonicb2}).

Next we prove (\ref{jacobi-b2}), still assuming $D^{2}u$ is diagonalized at
point $p.$ Plugging in $\lambda_{1}=\lambda_{2}$ into (\ref{lapb2a}),
(\ref{lapb2b}), and (\ref{lapb2c}), we get
\[
2\bigtriangleup_{g}b_{2}\geq
\]%
\begin{align*}
&  \lambda_{1}^{2}\left(  h_{111}^{2}+3h_{221}^{2}+\frac{2\lambda_{3}}%
{\lambda_{1}-\lambda_{3}}h_{331}^{2}\right)  \\
&  +\;\lambda_{1}^{2}\left(  3h_{112}^{2}+h_{222}^{2}+\frac{2\lambda_{3}%
}{\lambda_{1}-\lambda_{3}}h_{332}^{2}\right)  \\
&  +\lambda_{1}^{2}\left(  \frac{\lambda_{1}+\lambda_{3}}{\lambda_{1}%
-\lambda_{3}}\right)  \left(  h_{113}^{2}+h_{223}^{2}\right)  .
\end{align*}
Differentiating the eigenvector equations in the neighborhood where
$\lambda_{1}\equiv$ $\lambda_{2}$
\[
\left(  D^{2}u\right)  U=\frac{\lambda_{1}+\lambda_{2}}{2}U,\ \left(
D^{2}u\right)  V=\frac{\lambda_{1}+\lambda_{2}}{2}V,\ \text{and}\ \left(
D^{2}u\right)  W=\lambda_{3}W,
\]
we see that $u_{11e}=u_{22e}$ for any $e\in\mathbb{R}^{3}$ at point $p.$ Using
the minimal surface equation (\ref{Emin}), we then have
\[
h_{11k}=h_{22k}=-\frac{1}{2}h_{33k}%
\]
at point $p.$ Thus%
\[
\bigtriangleup_{g}b_{2}\geq\lambda_{1}^{2}\left[  2\left(  \frac{\lambda
_{1}+\lambda_{3}}{\lambda_{1}-\lambda_{3}}\right)  h_{111}^{2}+2\left(
\frac{\lambda_{1}+\lambda_{3}}{\lambda_{1}-\lambda_{3}}\right)  h_{112}%
^{2}+\left(  \frac{\lambda_{1}+\lambda_{3}}{\lambda_{1}-\lambda_{3}}\right)
h_{113}^{2}\right]  .
\]

The gradient$|\nabla_{g}b_{2}|^{2}$ has the expression at $p$
\[
|\nabla_{g}b_{2}|^{2}=\sum_{k=1}^{3}g^{kk}\left(  \frac{1}{2}\frac{\lambda
_{1}}{1+\lambda_{1}^{2}}\partial_{k}u_{11}+\frac{1}{2}\frac{\lambda_{2}%
}{1+\lambda_{2}^{2}}\partial_{k}u_{22}\right)  ^{2}=\sum_{k=1}^{3}\lambda
_{1}^{2}h_{11k}^{2}.
\]
Thus at $p$%
\begin{gather*}
\bigtriangleup_{g}b_{2}-\frac{1}{3}|\nabla_{g}b_{2}|^{2}\geq\\
\lambda_{1}^{2}\left\{  \left[  2\left(  \frac{\lambda_{1}+\lambda_{3}%
}{\lambda_{1}-\lambda_{3}}\right)  -\frac{1}{3}\right]  h_{111}^{2}+\left[
2\left(  \frac{\lambda_{1}+\lambda_{3}}{\lambda_{1}-\lambda_{3}}\right)
-\frac{1}{3}\right]  h_{112}^{2}+\left(  \frac{\lambda_{1}+\lambda_{3}%
}{\lambda_{1}-\lambda_{3}}-\frac{1}{3}\right)  h_{113}^{2}\right\} \\
\geq0,
\end{gather*}
where we again used $\lambda_{1}+2\lambda_{3}>0$ from (\ref{l_1+2l_3>0}). We
have proved (\ref{jacobi-b2}) of Lemma 2.3.
\end{proof}

\begin{proposition}
\label{PIJacobi}Let $u$ be a smooth solution to the special\ Lagrangian
equation (\ref{EsLag}) with $n=3$ and $\Theta=\pi/2$ on $B_{4}\left(
0\right)  \subset\mathbb{R}^{3}.$ Set
\[
b=\max\left\{  \ln\sqrt{1+\lambda_{\max}^{2}},\ K\right\}
\]
with $K=1+\ln\sqrt{1+\tan^{2}\left(  \frac{\pi}{6}\right)  .}$ Then $b$
satisfies the integral Jacobi inequality
\begin{equation}
\int_{B_{4}}-\left\langle \nabla_{g}\varphi,\nabla_{g}b\right\rangle
_{g}dv_{g}\geq\frac{1}{3}\int_{B_{4}}\varphi\left\vert \nabla_{g}b\right\vert
^{2}dv_{g}\label{IJacobi-3}%
\end{equation}
for all non-negative $\varphi\in C_{0}^{\infty}\left(  B_{4}\right)  .$
\end{proposition}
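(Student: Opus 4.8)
The plan is to read (\ref{IJacobi-3}) as the weak (distributional) form of the pointwise Jacobi inequality $\bigtriangleup_{g}b\geq\frac{1}{3}|\nabla_{g}b|^{2}$, and to establish it by first showing that $b$ is a \emph{viscosity} subsolution of this inequality and then passing to the integral form. Since $b=\max\{w,K\}$ with $w=\ln\sqrt{1+\lambda_{\max}^{2}}$, and the constant $K$ trivially satisfies the inequality, the max-of-subsolutions principle reduces everything to showing that $w$ is a viscosity subsolution on the open set $U=\{w>K\}$, where $b=w$. Here the choice of $K$ is exactly calibrated: at a triple coincidence $\lambda_{1}=\lambda_{2}=\lambda_{3}$ one has $\lambda_{i}=\tan(\pi/6)$ and $w=\frac{1}{2}\ln\frac{4}{3}<K$, so on $U$ the top eigenvalue is strictly separated from $\lambda_{3}$. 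In particular, wherever $\lambda_{1}=\lambda_{2}$ on $U$ we automatically have $\lambda_{2}>\lambda_{3}$, so the symmetric function $b_{2}$ of Lemma 2.3 is smooth near such points.

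To verify the subsolution property at $p\in U$, let $P$ be any $C^{2}$ function with $P\geq w$ near $p$ and $P(p)=w(p)$; I must show $\bigtriangleup_{g}P(p)\geq\frac{1}{3}|\nabla_{g}P(p)|^{2}$. If $\lambda_{1}>\lambda_{2}$ at $p$, then $w=\ln\sqrt{1+\lambda_{1}^{2}}$ is smooth near $p$, the touching forces $\nabla_{g}P(p)=\nabla_{g}w(p)$ and $\bigtriangleup_{g}P(p)\geq\bigtriangleup_{g}w(p)$, and the conclusion is immediate from Lemma 2.2. The delicate case is a coincidence $\lambda_{1}=\lambda_{2}>\lambda_{3}$ at $p$.

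Near such $p$ the plan is to exploit the smooth spectral projection onto the top two-dimensional eigenspace, which puts $D^{2}u$ in block form and gives $\lambda_{\max}=M+\sqrt{D^{2}+q^{2}}$ with $M,D,q$ smooth and $D(p)=q(p)=0$. The decisive step is a dichotomy on the $1$-jet of $(D,q)$ at $p$. If the first derivatives of the trace-free part of the top block do not all vanish, then $\sqrt{D^{2}+q^{2}}$ has a genuine conical (ridge) singularity opening upward, so $w$ has a downward cone point at $p$ and admits \emph{no} $C^{2}$ touching from above, and the subsolution condition is vacuous. If instead these first derivatives vanish, then unwinding in the eigenframe yields $h_{11k}=h_{22k}$ for every $k$, which is precisely the relation driving (\ref{jacobi-b2}); I expect to re-run that computation pointwise, using only $\lambda_{1}=\lambda_{2}$, the inequality $\lambda_{1}+2\lambda_{3}>0$ from (\ref{l_1+2l_3>0}), $\sigma_{2}\geq 1$, and $h_{11k}=h_{22k}$ (rather than the global identity $\lambda_{1}\equiv\lambda_{2}$), to get $\bigtriangleup_{g}b_{2}(p)\geq\frac{1}{3}|\nabla_{g}b_{2}(p)|^{2}$. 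Since $w\geq b_{2}$ on $U$ with equality at $p$, the test function $P$ touches the smooth $b_{2}$ from above, so $\bigtriangleup_{g}P(p)\geq\bigtriangleup_{g}b_{2}(p)\geq\frac{1}{3}|\nabla_{g}b_{2}(p)|^{2}=\frac{1}{3}|\nabla_{g}P(p)|^{2}$.

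Finally I would pass from the viscosity statement to (\ref{IJacobi-3}). Because $\lambda_{\max}$ is Lipschitz in the fixed smooth solution $u$, the function $b$ lies in $W^{1,\infty}_{\mathrm{loc}}$, so both integrands are well defined; and since $\bigtriangleup_{g}$ is a linear, divergence-form, uniformly elliptic operator with smooth coefficients on $B_{4}$, a continuous viscosity subsolution of $\bigtriangleup_{g}b\geq\frac{1}{3}|\nabla_{g}b|^{2}$ is a distributional subsolution, which is exactly (\ref{IJacobi-3}). I expect the main obstacle to be the rigorous handling of the coincidence set $\{\lambda_{1}=\lambda_{2}\}$: Lemma 2.3 in its stated form gives only subharmonicity there unless $\lambda_{1}\equiv\lambda_{2}$ in a neighborhood, so the entire argument hinges on the cone dichotomy, whose point is that the only coincidence points admitting a touching from above are exactly those where $h_{11k}=h_{22k}$ and the full Jacobi computation survives. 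As an elementary fallback avoiding viscosity theory, I would instead integrate Lemma 2.2 over $U\setminus N_{\varepsilon}(\{\lambda_{1}=\lambda_{2}\})$, integrate by parts, and check that the boundary term along the shrinking tube carries the correct sign, again using the downward-cone structure of $w$.
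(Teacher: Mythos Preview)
Your route is genuinely different from the paper's and, modulo one step, sound. The paper never uses viscosity theory: it splits on whether $S=\{\lambda_1=\lambda_2\}$ has measure zero. When $|S|=0$ it partitions $\{b>K\}$ into $\Omega_1(\tau)=\{b_1>b_2+\tau\}$ and its complement $\Omega_2(\tau)$, integrates by parts with Lemma~2.2 on $\Omega_1$ and with only the subharmonicity (\ref{subharmonicb2}) of $b_2$ on $\Omega_2$, uses Sard to make the interface piecewise $C^1$, checks the combined boundary term has a sign, and lets $\tau\to0$. When $|S|>0$ it invokes \emph{analyticity} of $u$ to force the discriminant $\prod(\lambda_i-\lambda_j)^2$ to vanish identically, reducing either to a trivial case or to $\lambda_1\equiv\lambda_2$ on each component of $\{b>K\}$, where (\ref{jacobi-b2}) applies. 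Your cone dichotomy replaces all of this by identifying exactly which coincidence points admit a $C^2$ touch from above---those with $u_{11k}=u_{22k}$ in the eigenframe---and at such points the computation behind (\ref{jacobi-b2}) goes through verbatim, since its only inputs are the pointwise relations $\lambda_1=\lambda_2$ and $h_{11k}=h_{22k}$ together with (\ref{l_1+2l_3>0}). A pleasant dividend is that you never need analyticity.

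The one real gap is the final passage. The inequality $\Delta_g b\ge\tfrac13|\nabla_g b|^2$ is semilinear in $\nabla b$, so Ishii's equivalence theorem for linear operators does not apply as stated, and ``viscosity $\Rightarrow$ distributional'' is not automatic. The clean fix is Hopf--Cole: set $v=e^{-b/3}$, which is Lipschitz and positive; a $C^2$ function $Q$ touching $v$ from below corresponds to $-3\ln Q$ touching $b$ from above, and your pointwise inequality transforms exactly into $\Delta_g Q(p)\le0$. Thus $v$ is a viscosity, hence distributional, $\Delta_g$-supersolution, and testing $\int\langle\nabla_g v,\nabla_g\psi\rangle_g\,dv_g\ge0$ with the Lipschitz compactly supported $\psi=\varphi e^{b/3}$ unwinds to (\ref{IJacobi-3}). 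Two minor remarks: the corner of $\sqrt{D^2+q^2}$ opens \emph{upward}, and it is an upward corner that forbids $C^1$ touching from above (``downward'' is a slip of the pen); and your fallback of shrinking a tube around $S$ is unsafe because $S$ need not be a manifold---if you wish to avoid viscosity machinery, the paper's level-set slicing of $b_1-b_2$ is the correct substitute.
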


\begin{proof}
If $b_{1}=\ln\sqrt{1+\lambda_{\max}^{2}}$ is smooth everywhere, then the
pointwise Jacobi inequality (\ref{Jacobi-3d}) in Lemma 2.2 \ already implies
the integral Jacobi (\ref{IJacobi-3}).\ It is known that $\lambda_{\max}$ is
always a Lipschitz function of the entries of the Hessian $D^{2}u.$ \ Now $u$
is smooth in $x,$ so $b_{1}=\ln\sqrt{1+\lambda_{\max}^{2}}$ is Lipschitz in
terms of $x.$ If \ $b_{1}$ (or equivalently $\lambda_{\max})\;$is not smooth,
then the first two largest eigenvalues $\lambda_{1}\left(  x\right)  $ and
$\lambda_{2}\left(  x\right)  $ coincide, and $b_{1}\left(  x\right)
=b_{2}\left(  x\right)  ,$ where $b_{2}\left(  x\right)  $ is the average
$b_{2}=\left(  \ln\sqrt{1+\lambda_{1}^{2}}+\ln\sqrt{1+\lambda_{2}^{2}}\right)
/2.$ \ We prove the integral Jacobi inequality (\ref{IJacobi-3}) for a
possibly singular $b_{1}\left(  x\right)  $ in two cases. Set
\[
S=\left\{  x|\ \lambda_{1}\left(  x\right)  =\lambda_{2}\left(  x\right)
\right\}  .
\]

Case 1. $S$ has measure zero. For small $\tau>0,$ let%
\begin{align*}
\Omega &  =B_{4}\backslash\left\{  x|\ b_{1}\left(  x\right)  \leq K\right\}
=B_{4}\backslash\left\{  x|\ b\left(  x\right)  =K\right\}  \\
\Omega_{1}\left(  \tau\right)   &  =\left\{  x|\ b\left(  x\right)
=b_{1}\left(  x\right)  >b_{2}\left(  x\right)  +\tau\right\}  \cap\Omega\\
\Omega_{2}\left(  \tau\right)   &  =\left\{  x|\ b_{2}\left(  x\right)  \leq
b\left(  x\right)  =b_{1}\left(  x\right)  <b_{2}\left(  x\right)
+\tau\right\}  \cap\Omega.
\end{align*}
Now $b\left(  x\right)  =b_{1}\left(  x\right)  $ is smooth in $\overline
{\Omega_{1}\left(  \tau\right)  }.$ We claim that $b_{2}\left(  x\right)  $ is
smooth in $\overline{\Omega_{2}\left(  \tau\right)  }.$ We know $b_{2}\left(
x\right)  $ is smooth wherever $\lambda_{2}\left(  x\right)  >\lambda
_{3}\left(  x\right)  .$ If (the \ Lipschitz) $b_{2}\left(  x\right)  $ is not
smooth at $x_{\ast}\in\overline{\Omega_{2}\left(  \tau\right)  },$ then%
\begin{align*}
\ln\sqrt{1+\lambda_{3}^{2}} &  =\ln\sqrt{1+\lambda_{2}^{2}}\geq\ln
\sqrt{1+\lambda_{1}^{2}}-2\tau\\
&  \geq\ln\sqrt{1+\tan^{2}\left(  \frac{\pi}{6}\right)  }+1-2\tau,
\end{align*}
by the choice of $K.$ For small enough $\tau$, we have $\lambda_{2}%
=\lambda_{3}>\tan\left(  \frac{\pi}{6}\right)  $ and a contradiction
\[
\left(  \theta_{1}+\theta_{2}+\theta_{3}\right)  \left(  x_{\ast}\right)
>\frac{\pi}{2}.
\]

Note that
\begin{gather*}
\int_{B_{4}}-\left\langle \nabla_{g}\varphi,\nabla_{g}b\right\rangle
_{g}dv_{g}=\int_{\Omega}-\left\langle \nabla_{g}\varphi,\nabla_{g}%
b\right\rangle _{g}dv_{g}\\
=\lim_{\tau\rightarrow0^{+}}\left[  \int_{\Omega_{1}\left(  \tau\right)
}-\left\langle \nabla_{g}\varphi,\nabla_{g}b\right\rangle _{g}dv_{g}%
+\int_{\Omega_{2}\left(  \tau\right)  }-\left\langle \nabla_{g}\varphi
,\nabla_{g}\left(  b_{2}+\tau\right)  \right\rangle _{g}dv_{g}\right]  .
\end{gather*}
By the smoothness of $b$ in $\Omega_{1}\left(  \tau\right)  $ and $b_{2}$ in
$\Omega_{2}\left(  \tau\right)  ,$ and also inequalities (\ref{Jacobi-3d}) and
(\ref{subharmonicb2}), we have
\begin{align*}
&  \int_{\Omega_{1}\left(  \tau\right)  }-\left\langle \nabla_{g}%
\varphi,\nabla_{g}b\right\rangle _{g}dv_{g}+\int_{\Omega_{2}\left(
\tau\right)  }-\left\langle \nabla_{g}\varphi,\nabla_{g}\left(  b_{2}%
+\tau\right)  \right\rangle _{g}dv_{g}\\
&  =\int_{\partial\Omega_{1}\left(  \tau\right)  }-\varphi\text{ }%
\partial_{\gamma_{g}^{1}}b\text{ }dA_{g}+\int_{\Omega_{1}\left(  \tau\right)
}\varphi\bigtriangleup_{g}b_{1}dv_{g}\\
&  +\int_{\partial\Omega_{2}\left(  \tau\right)  }-\varphi\partial_{\gamma
_{g}^{2}}\left(  b_{2}+\tau\right)  dA_{g}+\int_{\Omega_{2}\left(
\tau\right)  }\varphi\bigtriangleup_{g}\left(  b_{2}+\tau\right)  dv_{g}\\
&  \geq\int_{\partial\Omega_{1}\left(  \tau\right)  }-\varphi\text{ }%
\partial_{\gamma_{g}^{1}}b\text{ }dA_{g}+\int_{\partial\Omega_{2}\left(
\tau\right)  }-\varphi\partial_{\gamma_{g}^{2}}\left(  b_{2}+\tau\right)
dA_{g}+\frac{1}{3}\int_{\Omega_{1}\left(  \tau\right)  }\varphi\left\vert
\nabla_{g}b_{1}\right\vert ^{2}dv_{g},
\end{align*}
where $\gamma_{g}^{1}$ and $\gamma_{g}^{2}$ are the outward co-normals of
$\partial\Omega_{1}\left(  \tau\right)  $ and $\partial\Omega_{2}\left(
\tau\right)  $ with respect to the metric $g.$

Observe that if $b_{1}$ is not smooth on any part of $\partial\Omega
\backslash\partial B_{4}$, which is the $K$-level set of $b_{1,}$ then on this
portion $\partial\Omega\backslash\partial B_{4}$ is also the $K$-level set of
$b_{2},$ which is smooth near this portion. Applying Sard's theorem, we can
perturb $K$ so that $\partial\Omega$ is piecewise $C^{1}.$ Applying Sard's
theorem again, we find a subsequence of positive $\tau$ going to $0,$ so that
the boundaries $\partial\Omega_{1}\left(  \tau\right)  $ and $\partial
\Omega_{2}\left(  \tau\right)  $ are piecewise $C^{1}.$

Then, we show the above boundary integrals are non-negative. The boundary
integral portion along $\partial\Omega$ is easily seen non-negative, because
either $\varphi=0$, or $-\partial_{\gamma_{g}^{1}}b\geq0,\ -\partial
_{\gamma_{g}^{2}}\left(  b_{2}+\tau\right)  $ $\geq0$ there. The boundary
integral portion in the interior of $\Omega$ is also non-negative, because
there we have
\begin{gather*}
b=b_{2}+\tau\ \ \ \text{(and }b\geq b_{2}+\tau\ \ \text{in }\Omega_{1}\left(
\tau\right)  \text{)}\\
-\partial_{\gamma_{g}^{1}}b\ -\partial_{\gamma_{g}^{2}}\left(  b_{2}%
+\tau\right)  =\partial_{\gamma_{g}^{2}}b\ -\partial_{\gamma_{g}^{2}}\left(
b_{2}+\tau\right)  \geq0.
\end{gather*}
Taking the limit along the (Sard) sequence of $\tau$ going to $0,$ we obtain
$\Omega_{1}\left(  \tau\right)  \rightarrow\Omega$ up to a set of measure
zero, and
\begin{align*}
&  \int_{B_{4}}-\left\langle \nabla_{g}\varphi,\nabla_{g}b\right\rangle
_{g}dv_{g}\\
&  =\int_{\Omega}-\left\langle \nabla_{g}\varphi,\nabla_{g}b\right\rangle
_{g}dv_{g}\geq\frac{1}{3}\int_{\Omega}\left\vert \nabla_{g}b\right\vert
^{2}dv_{g}\\
&  =\frac{1}{3}\int_{B_{4}}\left\vert \nabla_{g}b\right\vert ^{2}dv_{g}.
\end{align*}

Case 2. $S$ has positive measure. The discriminant
\[
\mathcal{D}=\left(  \lambda_{1}-\lambda_{2}\right)  ^{2}\left(  \lambda
_{2}-\lambda_{3}\right)  ^{2}\left(  \lambda_{3}-\lambda_{1}\right)  ^{2}%
\]
is an analytic function in $B_{4},$ because the smooth $u$ is actually
analytic (cf. [M, p. 203]). So $\mathcal{D}$ must vanish identically. Then we
have either $\lambda_{1}\left(  x\right)  =\lambda_{2}\left(  x\right)  $ or
$\lambda_{2}\left(  x\right)  =\lambda_{3}\left(  x\right)  $ at any point
$x\in B_{4}.$ In turn, we know that $\lambda_{1}\left(  x\right)  =\lambda
_{2}\left(  x\right)  =\lambda_{3}\left(  x\right)  =\tan\left(  \frac{\pi}%
{6}\right)  $ and $b=K>b_{1}\left(  x\right)  $ at every \textquotedblleft
boundary\textquotedblright\ point of $S$ inside $B_{4},$ $x\in\partial
S\cap\mathring{B}_{4}.$ If the \textquotedblleft boundary\textquotedblright%
\ set $\partial S$ has positive measure, then $\lambda_{1}\left(  x\right)
=\lambda_{2}\left(  x\right)  =\lambda_{3}\left(  x\right)  =\tan\left(
\frac{\pi}{6}\right)  $ everywhere by the analyticity of $u,$ and
(\ref{IJacobi-3}) is trivially true. In the case that $\partial S$ has zero
measure, $b=b_{1}>K$ is smooth up to the boundary of every component of
$\left\{  x|\ b\left(  x\right)  >K\right\}  .$ By the pointwise Jacobi
inequality (\ref{jacobi-b2}), the integral inequality (\ref{IJacobi-3}) is
also valid in case 2.
\end{proof}

\ 

\section{Proof Of Theorem 1.1}

We assume that $R=4$ and $u$ is a solution on $B_{4}\subset\mathbb{R}^{3}$ for
simplicity of notation. By scaling $v\left(  x\right)  =u\left(  \frac{R}%
{4}x\right)  /\left(  \frac{R}{4}\right)  ^{2},$ we still get the estimate in
Theorem 1.1. Without loss of generality, we assume that the continuous Hessian
$D^{2}u$ sits on the convex branch of $\left\{  \left(  \lambda_{1}%
,\lambda_{2},\lambda_{3}\right)  |\ \lambda_{1}\lambda_{2}+\lambda_{2}%
\lambda_{3}+\lambda_{3}\lambda_{1}=1\right\}  $ containing $\left(
1,1,1\right)  /\sqrt{3},$ then $u$ satisfies (\ref{EsLag}) with $n=3$ and
$\Theta=\pi/2.$ By symmetry this also covers the concave branch corresponding
to $\Theta=-\pi/2.$

Step 1. By the integral Jacobi inequality (\ref{IJacobi-3}) in Proposition
\ref{PIJacobi}, $b$\ is subharmonic in the integral sense, then $b^{3}$ is
also subharmonic in the integral sense on the minimal surface $\mathfrak{M}%
=\left(  x,Du\right)  :$%
\begin{align*}
\int-\left\langle \nabla_{g}\varphi,\nabla_{g}b^{3}\right\rangle _{g}dv_{g} &
=\int-\left\langle \nabla_{g}\left(  3b^{2}\varphi\right)  -6b\varphi
\nabla_{g}b,\nabla_{g}b\right\rangle _{g}dv_{g}\\
&  \geq\int\left(  \varphi b^{2}\left\vert \nabla_{g}b\right\vert
^{2}+6b\varphi\left\vert \nabla_{g}b\right\vert ^{2}\right)  dv_{g}\geq0
\end{align*}
for all non-negative $\varphi\in C_{0}^{\infty},$ approximating $b$ by smooth
functions if necessary.

Applying Michael-Simon's mean value inequality [MS, Theorem 3.4] to the
Lipschitz subharmonic function $b^{3},$ we obtain
\[
b\left(  0\right)  \leq C\left(  3\right)  \left(  \int_{\mathfrak{B}_{1}%
\cap\mathfrak{M}}b^{3}dv_{g}\right)  ^{1/3}\leq C\left(  3\right)  \left(
\int_{B_{1}}b^{3}dv_{g}\right)  ^{1/3},
\]
where $\mathfrak{B}_{r}$ is the ball with radius $r$ and center $\left(
0,Du\left(  0\right)  \right)  $ in $\mathbb{R}^{3}\times\mathbb{R}^{3}$, and
$B_{r}$ is the ball with radius $r$ and center $0$ in $\mathbb{R}^{3}.$ Choose
a cut-off function $\varphi\in C_{0}^{\infty}\left(  B_{2}\right)  $ such that
$\varphi\geq0,$ $\varphi=1$ on $B_{1},$ and $\left\vert D\varphi\right\vert
\leq1.1,$ we then have
\[
\left(  \int_{B_{1}}b^{3}dv_{g}\right)  ^{1/3}\leq\left(  \int_{B_{2}}%
\varphi^{6}b^{3}dv_{g}\right)  ^{1/3}=\left(  \int_{B_{2}}\left(  \varphi
b^{1/2}\right)  ^{6}dv_{g}\right)  ^{1/3}.
\]
Applying the Sobolev inequality on the minimal surface $\mathfrak{M}$ [MS,
Theorem 2.1] or [A, Theorem 7.3] to $\varphi b^{1/2},$ which we may assume to
be $C^{1}$ by approximation, we obtain
\[
\left(  \int_{B_{2}}\left(  \varphi b^{1/2}\right)  ^{6}dv_{g}\right)
^{1/3}\leq C\left(  3\right)  \int_{B_{2}}\left\vert \nabla_{g}\left(  \varphi
b^{1/2}\right)  \right\vert ^{2}dv_{g}.
\]
Splitting the integrand as follows
\begin{align*}
\left\vert \nabla_{g}\left(  \varphi b^{1/2}\right)  \right\vert ^{2}  &
=\left\vert \frac{1}{2b^{1/2}}\varphi\nabla_{g}b+b^{1/2}\nabla_{g}%
\varphi\right\vert ^{2}\leq\frac{1}{2b}\varphi^{2}\left\vert \nabla
_{g}b\right\vert ^{2}+2b\left\vert \nabla_{g}\varphi\right\vert ^{2}\\
&  \leq\frac{1}{2}\varphi^{2}\left\vert \nabla_{g}b\right\vert ^{2}%
+2b\left\vert \nabla_{g}\varphi\right\vert ^{2},
\end{align*}
where we used $b\geq1,$ we get
\begin{align*}
b\left(  0\right)   &  \leq C\left(  3\right)  \int_{B_{2}}\left\vert
\nabla_{g}\left(  \varphi b^{1/2}\right)  \right\vert ^{2}dv_{g}\\
&  \leq C\left(  3\right)  \left(  \int_{B_{2}}\varphi^{2}\left\vert
\nabla_{g}b\right\vert ^{2}dv_{g}+\int_{B_{2}}b\left\vert \nabla_{g}%
\varphi\right\vert ^{2}dv_{g}\right) \\
&  \leq\underset{\text{Step\ 2}}{\underbrace{C\left(  3\right)  \left\Vert
Du\right\Vert _{L^{\infty}\left(  B_{2}\right)  }}}+C\left(  3\right)
\underset{\text{step\ 3}}{\underbrace{\left[  \left\Vert Du\right\Vert
_{L^{\infty}\left(  B_{3}\right)  }^{2}+\left\Vert Du\right\Vert _{L^{\infty
}\left(  B_{4}\right)  }^{3}\right]  }}.
\end{align*}

Step 2. By (\ref{IJacobi-3}) in Proposition \ref{PIJacobi}, $b$ satisfies the
Jacobi inequality in the integral sense:
\[
3\bigtriangleup_{g}b\geq\left\vert \nabla_{g}b\right\vert ^{2}.
\]
Multiplying both sides by the above non-negative cut-off function $\varphi\in
C_{0}^{\infty}\left(  B_{2}\right)  ,$ then integrating, we obtain
\begin{align*}
\int_{B_{2}}\varphi^{2}\left\vert \nabla_{g}b\right\vert ^{2}dv_{g} &
\leq3\int_{B_{2}}\varphi^{2}\bigtriangleup_{g}bdv_{g}\\
&  =-3\int_{B_{2}}\left\langle 2\varphi\nabla_{g}\varphi,\nabla_{g}%
b\right\rangle dv_{g}\\
&  \leq\frac{1}{2}\int_{B_{2}}\varphi^{2}\left\vert \nabla_{g}b\right\vert
^{2}dv_{g}+18\int_{B_{2}}\left\vert \nabla_{g}\varphi\right\vert ^{2}dv_{g}.
\end{align*}
It follows that
\[
\int_{B_{2}}\varphi^{2}\left\vert \nabla_{g}b\right\vert ^{2}dv_{g}\leq
36\int_{B_{2}}\left\vert \nabla_{g}\varphi\right\vert ^{2}dv_{g}.
\]
Observe the (\textquotedblleft conformality\textquotedblright)\ identity:
\[
\left(  \frac{1}{1+\lambda_{1}^{2}},\frac{1}{1+\lambda_{2}^{2}},\frac
{1}{1+\lambda_{3}^{2}}\right)  V=\left(  \sigma_{1}-\lambda_{1,}\text{
\ }\sigma_{1}-\lambda_{2,}\text{ \ }\sigma_{1}-\lambda_{3}\right)
\]
where we used the identity $V=%
{\displaystyle\prod\limits_{i=1}^{3}}
\sqrt{\left(  1+\lambda_{i}^{2}\right)  }=\sigma_{1}-\sigma_{3}$ with
$\sigma_{2}=1$. \ We then have
\begin{align}
\left\vert \nabla_{g}\varphi\right\vert ^{2}dv_{g} &  =\sum_{i=1}^{3}%
\frac{\left(  D_{i}\varphi\right)  ^{2}}{1+\lambda_{i}^{2}}Vdx=\sum_{i=1}%
^{3}\left(  D_{i}\varphi\right)  ^{2}\left(  \sigma_{1}-\lambda_{i}\right)
dx\label{conformality}\\
&  \leq2.42\bigtriangleup u\ dx.\nonumber
\end{align}
Thus
\begin{gather*}
\int_{B_{2}}\varphi^{2}\left\vert \nabla_{g}b\right\vert ^{2}dv_{g}\leq
C\left(  3\right)  \int_{B_{2}}\bigtriangleup u\ dx\\
\leq C\left(  3\right)  \left\Vert Du\right\Vert _{L^{\infty}\left(
B_{2}\right)  }.
\end{gather*}

Step 3. By (\ref{conformality}), we get
\[
\int_{B_{2}}b\left\vert \nabla_{g}\varphi\right\vert ^{2}dv_{g}\leq C\left(
3\right)  \int_{B_{2}}b\bigtriangleup u\ dx.
\]
Choose another cut-off function $\psi\in C_{0}^{\infty}\left(  B_{3}\right)  $
such that $\psi\geq0,$ $\psi=1$ on $B_{2},$ and $\left\vert D\psi\right\vert
\leq1.1.$ We have
\begin{align*}
\int_{B_{2}}b\bigtriangleup udx &  \leq\int_{B_{3}}\psi b\bigtriangleup
udx=\int_{B_{3}}-\left\langle bD\psi+\psi Db,Du\right\rangle dx\\
&  \leq\left\Vert Du\right\Vert _{L^{\infty}\left(  B_{3}\right)  }\int
_{B_{3}}\left(  b\left\vert D\psi\right\vert +\psi\left\vert Db\right\vert
\right)  dx\\
&  \leq C\left(  3\right)  \left\Vert Du\right\Vert _{L^{\infty}\left(
B_{3}\right)  }\int_{B_{3}}\left(  b+\left\vert Db\right\vert \right)  dx.
\end{align*}
Now
\[
b=\max\left\{  \ln\sqrt{1+\lambda_{\max}^{2}},\ K\right\}  \leq\lambda_{\max
}+K<\lambda_{1}+\lambda_{2}+\lambda_{3}+K=\bigtriangleup u+K,
\]
where $\lambda_{2}+\lambda_{3}>0$ follows from $\arctan\lambda_{2}%
+\arctan\lambda_{3}=\frac{\pi}{2}-\arctan\lambda_{1}>0.$ Hence
\[
\int_{B_{3}}bdx\leq C(3)(1+\left\Vert Du\right\Vert _{L^{\infty}\left(
B_{3}\right)  }).
\]
And we have left to estimate $\int_{B_{3}}\left\vert Db\right\vert dx:$
\begin{align*}
\ \int_{B_{3}}\left\vert Db\right\vert dx &  \leq\int_{B_{3}}\sqrt{\sum
_{i=1}^{3}\frac{\left(  b_{i}\right)  ^{2}}{\left(  1+\lambda_{i}^{2}\right)
}\left(  1+\lambda_{1}^{2}\right)  \left(  1+\lambda_{2}^{2}\right)  \left(
1+\lambda_{3}^{2}\right)  }\ dx\\
&  =\int_{B_{3}}\left\vert \nabla_{g}b\right\vert Vdx\\
&  \leq\left(  \int_{B_{3}}\left\vert \nabla_{g}b\right\vert ^{2}Vdx\right)
^{1/2}\left(  \int_{B_{3}}Vdx\right)  ^{1/2}.
\end{align*}
Repeating the \textquotedblleft Jacobi\textquotedblright\ argument from Step
2, we see
\[
\int_{B_{3}}\left\vert \nabla_{g}b\right\vert ^{2}Vdx\leq C\left(  3\right)
\left\Vert Du\right\Vert _{L^{\infty}\left(  B_{4}\right)  }.
\]
Then by the Sobolev inequality on the minimal surface $\mathfrak{M},$ we have
\[
\int_{B_{3}}Vdx=\int_{B_{3}}dv_{g}\leq\int_{B_{4}}\phi^{6}dv_{g}\leq C\left(
3\right)  \left(  \int_{B_{4}}\left\vert \nabla_{g}\phi\right\vert ^{2}%
dv_{g}\right)  ^{3},
\]
where the non-negative cut-off function $\phi\in C_{0}^{\infty}\left(
B_{4}\right)  $ satisfies $\phi=1$ on $B_{3},$ and $\left\vert D\phi
\right\vert \leq1.1.$ Applying the conformality equality (\ref{conformality})
again, we obtain
\[
\int_{B_{4}}\left\vert \nabla_{g}\phi\right\vert ^{2}dv_{g}\leq C\left(
3\right)  \int_{B_{4}}\bigtriangleup u\ dx\leq C\left(  3\right)  \left\Vert
Du\right\Vert _{L^{\infty}\left(  B_{4}\right)  }.
\]
Thus we get
\[
\int_{B_{3}}Vdx\leq C\left(  3\right)  \left\Vert Du\right\Vert _{L^{\infty
}\left(  B_{4}\right)  }^{3}%
\]
and
\[
\int_{B_{3}}\left\vert Db\right\vert dx\leq C\left(  3\right)  \left\Vert
Du\right\Vert _{L^{\infty}\left(  B_{4}\right)  }^{2}.
\]
In turn, we obtain
\[
\int_{B_{2}}b\left\vert \nabla_{g}\varphi\right\vert ^{2}dv_{g}\leq C\left(
3\right)  \left[  K\left\Vert Du\right\Vert _{L^{\infty}\left(  B_{3}\right)
}+\left\Vert Du\right\Vert _{L^{\infty}\left(  B_{3}\right)  }^{2}+\left\Vert
Du\right\Vert _{L^{\infty}\left(  B_{4}\right)  }^{3}\right]  .
\]
Finally collecting all the estimates in the above three steps, we arrive at
\begin{align*}
\lambda_{\max}\left(  0\right)   &  \leq\exp\left[  C\left(  3\right)  \left(
\left\Vert Du\right\Vert _{L^{\infty}\left(  B_{4}\right)  }+\left\Vert
Du\right\Vert _{L^{\infty}\left(  B_{4}\right)  }^{2}+\left\Vert Du\right\Vert
_{L^{\infty}\left(  B_{4}\right)  }^{3}\right)  \right]  \\
&  \leq C\left(  3\right)  \exp\left[  C\left(  3\right)  \left\Vert
Du\right\Vert _{L^{\infty}\left(  B_{4}\right)  }^{3}\right]  .
\end{align*}
This completes the proof of Theorem 1.1.

\textbf{Remark. }A sharper Hessian estimate and a gradient estimate for the
special Lagrangian equation (\ref{EsLag}) with $n=2$ were derived by
elementary method in [WY1].  More involved arguments are needed to obtain the
Hessian and gradient estimates for  (\ref{EsLag}) with $n=3$ and $\left\vert
\Theta\right\vert >\pi/2$ in [WY2].

\bigskip

\end{document}